\documentclass{article}
\usepackage{amsmath,amsthm,amssymb,amsfonts,mathtools,enumerate,enumitem}

\theoremstyle{theorem}
\newtheorem{theorem}{Theorem}

\newtheorem{proposition}{Proposition}

\theoremstyle{definition}

\newtheorem{assumption}{\textbf{A}\ignorespaces}

\theoremstyle{remark}

\usepackage{hyperref}
\hypersetup{
    colorlinks,
    citecolor=black,
    filecolor=red,
    linkcolor=black,
    urlcolor=red
}
\usepackage{indentfirst}
\numberwithin{equation}{section}

\usepackage{mathtools}
\usepackage{authblk}
\usepackage{mathrsfs} 
\usepackage{setspace}
\usepackage{comment}
\usepackage{changepage}
\usepackage[compact]{titlesec}

\allowdisplaybreaks

\usepackage[round]{natbib}   
\AtBeginDocument{}

\def\assumptionautorefname{}

\makeatletter
\renewcommand{\assumptionautorefname}{A\@gobble}
\makeatother

\usepackage{enumitem,amssymb}
\setlist{nolistsep}

\setenumerate[1]{label=\Roman*.}
\setenumerate[2]{label=\Alph*.}
\setenumerate[3]{label=\roman*.}
\setenumerate[4]{label=\alph*.}

\begin{document}

\title{Solutions of Poisson’s Equation for Stochastically Monotone Markov Chains}
\author[1,2]{Peter W. Glynn}
\author[2]{Alex Infanger}
\affil[1]{Department of Management Science \& Engineering, Stanford University.}
\affil[2]{Institute for Computational \& Mathematical Engineering, Stanford University.}
\date{}
\maketitle

\begin{abstract} \noindent 
Stochastically monotone Markov chains arise in many applied domains, especially in the setting of queues and storage systems. Poisson’s equation is a key tool for analyzing additive functionals of such models, such as cumulative sums of waiting times or sums of rewards. In this paper, we show that when the reward function for such a Markov chain is monotone, the solution of Poisson’s equation is monotone. This implies that the value function associated with infinite horizon average reward is monotone in the state when the reward is monotone.
\end{abstract}
\section{Introduction}
Many stochastic models arising in queueing and storage applications can be formulated as Markov chains that are stochastically monotone, in the sense that if the process is initialized with more ``work'' in the system, then the system remains more congested. To be more precise, suppose that $X=(X_n:n\geq 0)$ takes values in a state space $S\subseteq \mathbb{R}$. For $x\in S$, let $P_x(\cdot|X_0=x)$ be the probability on the path-space of $X$ conditional on $X_0=x$ and let $E_x(\cdot)=E(\cdot|X_0=x)$ be its associated expectation. We say that $X$ is \emph{stochastically monotone} if for each $y\in S$, $P_x(X_1>y)$ is non-decreasing as a function of $x\in S$. Many birth-death chains on $\mathbb{Z}_+$ are stochastically monotone, as is the waiting time sequence $W=(W_n:n\geq 0)$ satisfying the recursion
\begin{align*}
W_{n+1} = [W_n+Z_{n+1}]^+
\end{align*}
for $n\geq 0$, where $Z_1,Z_2,...,$ are independent and identically distributed (iid) random variables (rv's) and $W_0\in S=\mathbb{R}_+$. Stochastically monotone Markov chains are ubiquitous within single station queueing environments; see \citet{stoyanComparisonMethodsQueues1983}, \citet{lundGeometricConvergenceRates1996}, and \citet*{lundComputableExponentialConvergence1996} for additional discussion and examples.

Consider such a Markov chain $X$, having a stationary distribution $\pi=(\pi(dx):x\in S)$. Assume $r:S\rightarrow\mathbb{R}$ is a ``reward'' function which is non-decreasing, so that $r(x)\leq r(y)$ for $x,y\in S$ with $x\leq y$. This covers the great majority of performance measures used within the queueing setting. Let $P=(P(x,dy):x,y\in S)$ be the one-step transition kernel (or matrix, when $S$ is discrete), and let
\begin{align*}
\pi r =\int_{S}^{}r(y)\pi(dy)
\end{align*}
be the steady-state expectation of $r(X_n)$. A fundamental object of interest in the analysis of Markov chains is \emph{Poisson's equation}, in which we wish to find the function $g$ for which
\begin{align}
(P-I)g=-r_c,\label{eq11}
\end{align}
where $r_c$ is the centered reward function given by $r_c(x)=r(x)-\pi r$.

The solution $g$ to \eqref{eq11} plays a key role in the analysis of the cumulative reward functional $S_n(r)=\sum_{j=0}^{n-1}r(X_j)$. In particular, the sequence of rv's defined by 
\begin{align}
g(X_n) + \sum_{j=0}^{n-1}r(X_j)-n\cdot \pi r\label{eq12}
\end{align}
is then a martingale (in the presence of appropriate integrability). The martingale structure then implies that
\begin{align*}
E_x S_n(r) = n\pi r + g(x)-E_xg(X_n).
\end{align*}
When $X$ is aperiodic and $g$ is $\pi$-integrable, this leads to the approximation, valid for large time horizons $n$, given by 
\begin{align*}
E_xS_n(r)\approx n\pi r + g(x)-\pi g.
\end{align*}
The martingale representation \eqref{eq12} also allows one to apply martingale theory to establish central limit theorems (CLT's) and laws of the iterated logarithm (LIL's) for $S_n(r)$; see, for example, \citet{maigretTheoremeLimiteCentrale1978} and \citet{glynnSolvingPoissonEquation2022}.

We note also that the value function $v=(v(x):x\in S)$ for a Markov decision process (MDP) associated with maximizing the long-run average reward $n^{-1}S_n(r)$ satisfies \eqref{eq11}, where $P$ is the transition kernel (or matrix) associated with the dynamics of $X$ under the optimal policy.

In view of the importance of Poisson's equation, this paper establishes the following fundamental result. In particular, when $X$ is stochastically monotone and $r$ is non-decreasing, the solution $g$ to Poisson's equation must necessarily also be non-decreasing. This monotonicity property implies, for example, that when constructing approximating numerical schemes for computing MDP value functions or value functions for uncontrolled Markov chains, the approximation should ideally be monotone when $X$ is suitably monotone. It turns out that the monotonicity of $g$ also plays a key role in studying truncation schemes for numerically computing stationary distributions for stochastically monotone chains; see \citet*{infangerConvergenceGeneralTruncationAugmentation2022}.

Our proof uses coupling arguments that may be of separate interest. Section \ref{sec2} contains statements of our main results, as well as our proofs.

\section{The Monotonicity Results}\label{sec2}
We discuss here our monotonicity results, starting with a simple proof that covers most examples, and then developing theory that covers richer classes of models. Our theory hinges on the following coupling of the Markov chain starting from $x\in S$ with the chain starting from $y\in S$. In particular, let
\begin{align*}
F(x,y) = P_x(X_1\leq y)
\end{align*}
for $x,y\in S$, and set
\begin{align*}
F^{-1}(x,y)=\inf\{z:F(x,z)\geq y\}.
\end{align*}
For $x\in S$, let $X_0(x)=x$ and put
\begin{align*}
X_{n+1}(x)=F^{-1}(X_n(x),U_{n+1})
\end{align*}
for $n\geq 0$, where $U_1,U_2,...$ is an iid sequence of rv's uniformly distributed on $[0,1]$ under the probability $P$ (having associated expectation $E$). The key observation is that for each $x\in S$,
\begin{align*}
P((X_j(x):j\geq 0)\in \cdot) =P_x((X_j:j\geq 0)\in \cdot),
\end{align*}
so that $X(x)=(X_j(x):j\geq 0)$ has the same distribution as $X$ under $P_x$. This coupling goes back to \citet*{kamaeStochasticInequalitiesPartially1977}.

Because $X$ is stochastically monotone, $F^{-1}(\cdot,y)$ is non-decreasing for each $y\in S$. Consequently, $F^{-1}(\cdot,U_{n+1})$ is non-decreasing, and it follows by induction that for each $n\geq 0$, $X_n(x)$ is a non-decreasing function of $x\in S$. Hence, if $r$ is a non-decreasing function $r_c(X_n(x))$ is non-decreasing in $x\in S$. Thus, if $r_c(X_n(x))$ is integrable, it is evident that
\begin{align*}
\sum_{j=0}^{n-1}E_xr_c(X_j)
\end{align*}
is non-decreasing in $x$. As a result, if
\begin{align}
\sum_{j=0}^{\infty}|E_xr_c(X_j)|<\infty\label{eq21}
\end{align}
for $x\in S$, then
\begin{align}
g(x)\overset{\Delta}{=}\sum_{j=0}^{\infty}E_xr_c(X_j)\label{eq22}
\end{align}
must be non-decreasing in $x\in S$. But if \eqref{eq21} holds, then it is easy to see that $g$ as defined by \eqref{eq22} is a solution of Poisson's equation \eqref{eq11}. We summarize our discussion thus far with our first result.
\begin{proposition} Suppose that $X$ is stochastically monotone and that $r$ is non-decreasing. If \eqref{eq21} holds for each $x\in S$, then $g$ as defined by \eqref{eq22} is a non-decreasing solution of Poisson's equation \eqref{eq11}.
\end{proposition}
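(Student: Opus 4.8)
The plan is to lean on the coupling $X(x) = (X_j(x) : j \geq 0)$ constructed above and to make explicit the two facts the preceding discussion uses without full justification: that $X(x)$ has the law of $X$ under $P_x$ (so that $E_x r_c(X_j) = E\,r_c(X_j(x))$), and that the monotone function $g$ of \eqref{eq22} actually solves \eqref{eq11}.

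For the first fact I would induct on $n$ to show that $(X_0(x), \dots, X_n(x))$ has the law of $(X_0, \dots, X_n)$ under $P_x$. The base case is immediate. For the step, condition on $(X_0(x), \dots, X_n(x))$: since $U_{n+1}$ is independent of $U_1, \dots, U_n$, hence of $(X_0(x), \dots, X_n(x))$, and since $F^{-1}(w, U_{n+1})$ has cumulative distribution function $F(w, \cdot) = P_w(X_1 \leq \cdot)$ by the inverse-transform identity (using right-continuity of $F(w,\cdot)$), the conditional law of $X_{n+1}(x) = F^{-1}(X_n(x), U_{n+1})$ given the past is $P(X_n(x), \cdot)$, the correct Markov transition. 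Together with the monotonicity of $F^{-1}(\cdot, u)$ noted above — the one place stochastic monotonicity enters — a second induction gives that $x \mapsto X_n(x)$ is non-decreasing $P$-a.s.\ for every $n$. Since $r$, and therefore $r_c$, is non-decreasing, $x \mapsto r_c(X_n(x))$ is then non-decreasing $P$-a.s.; each such variable is integrable because \eqref{eq21} makes every term $E_x|r_c(X_n)|$ finite, so taking expectations, $x \mapsto E_x r_c(X_n)$ is non-decreasing, every partial sum $\sum_{j=0}^{n-1} E_x r_c(X_j)$ is non-decreasing in $x$, and under \eqref{eq21} the pointwise limit $g$ of \eqref{eq22} inherits monotonicity.

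To finish I would verify Poisson's equation. By the Markov property, $E_x r_c(X_{j+1}) = E_x\bigl[E_{X_1} r_c(X_j)\bigr]$; summing over $j \geq 0$ gives $g(x) - r_c(x) = \sum_{j \geq 1} E_x r_c(X_j) = \sum_{j \geq 0} E_x\bigl[E_{X_1} r_c(X_j)\bigr]$, and the goal is to recognize the last expression as $E_x g(X_1) = (Pg)(x)$, so that $(P - I)g = -r_c$. The only substantive remaining task, which I expect to be the main obstacle, is the interchange of the conditional expectation given $X_1$ with the infinite sum defining $g(X_1)$; this is justified by Tonelli together with dominated convergence once one observes $\sum_{j \geq 0} E_x\bigl[E_{X_1} |r_c(X_j)|\bigr] = \sum_{j \geq 1} E_x |r_c(X_j)| \leq \sum_{j \geq 0} E_x |r_c(X_j)| < \infty$ by \eqref{eq21} at $x$. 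The same estimate shows $E_x |g(X_1)| < \infty$, so $(Pg)(x)$ is well-defined, and the $j = 0$ term of \eqref{eq21} already forces $|r(x) - \pi r| < \infty$, so no integrability of $r$ beyond the standing existence of $\pi r$ is needed. With these finiteness facts in place, the identity above is a routine rearrangement, completing the proof.
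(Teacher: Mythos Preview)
Your proposal is correct and follows essentially the same approach as the paper: the paper's proof is the discussion preceding the proposition, which uses the same $F^{-1}$-coupling, the same induction for monotonicity of $X_n(\cdot)$, and then asserts that under \eqref{eq21} it ``is easy to see'' that $g$ solves \eqref{eq11}. You simply fill in the two places the paper leaves implicit---the distributional identity $X(x)\overset{d}{=}X$ under $P_x$ and the Fubini-type interchange verifying Poisson's equation---so there is no substantive difference in method.
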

As noted in \citet{glynnSolvingPoissonEquation2022}, there exist Markov chains $X$ for which \eqref{eq11} is solvable, and yet the representation \eqref{eq22} for the solution fails to be valid. Consequently, we wish to develop alternative proofs that are more general. \citet{glynnSolvingPoissonEquation2022} show that when $r$ is $\pi$-integrable, $X$ is irreducible, and $S\subseteq \mathbb{Z}$, then
\begin{align}
g_z(x) = E_x\sum_{j=0}^{\tau-1}r_c(X_j)\label{eq23}
\end{align}
always solves \eqref{eq11}, where $\tau=\inf\{n\geq 1:X_n=z\}$ is the first return time to any (fixed) state $z\in S$. Furthermore, it is shown there that \eqref{eq12} is $P_x$-integrable and that
\begin{align*}
M_n\overset{\Delta}{=} g_z(X_n) + \sum_{j=0}^{n-1}r_c(X_j)
\end{align*}
is a $P_x$-martingale adapted to $(\mathcal{F}_n:n\geq 0)$, where $\mathcal{F}_n=\sigma(X_j:j\leq n)$.

We now use our coupling to prove the following result.

\begin{theorem}\label{thm1} Suppose that $X$ is an irreducible positive recurrent stochastically monotone Markov chain taking values in $S=\mathbb{Z}_+$ having stationary distribution $\pi$. If $r:S\rightarrow\mathbb{R}$ is a $\pi$-integrable non-decreasing function, then
\begin{align}
g_0(x) = E_x\sum_{j=0}^{\tau-1}r_c(X_j)\tag{\ref{eq23}a}\label{eq23a}
\end{align}
(with $\tau=\inf\{n\geq 1:X_n=0\}$) is a non-decreasing solution of Poisson's equation \eqref{eq11} and $(M_n:n\geq 0)$ is a $P_x$-martingale for each $x\in S$.
\end{theorem}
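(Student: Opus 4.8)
The plan is to first separate out the genuinely new content. Since $X$ is irreducible positive recurrent and $r$ is $\pi$-integrable, the results of \citet{glynnSolvingPoissonEquation2022} quoted above already give that $g_0$ defined by \eqref{eq23a} is a well-defined solution of \eqref{eq11} and that $(M_n:n\ge0)$ is a $P_x$-martingale for every $x\in S$. So the only thing left to establish is that $g_0$ is non-decreasing, and for this I would use the coupling. Work on the probability space carrying $(U_n:n\ge1)$, with all the chains $X(x)=(X_n(x):n\ge0)$ defined simultaneously and $X_n(x)$ non-decreasing in $x$ for each $n$; recall that $X(x)$ has the law of $X$ under $P_x$. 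Fix $x\le y$, set $\tau(x)=\inf\{n\ge1:X_n(x)=0\}$ and $\tau(y)=\inf\{n\ge1:X_n(y)=0\}$, so that $g_0(x)=E\sum_{j=0}^{\tau(x)-1}r_c(X_j(x))$ and $g_0(y)=E\sum_{j=0}^{\tau(y)-1}r_c(X_j(y))$. The first observation is that $\tau(x)\le\tau(y)$: at time $\tau(y)$ one has $0\le X_{\tau(y)}(x)\le X_{\tau(y)}(y)=0$, so $X_{\tau(y)}(x)=0$ and, since $\tau(y)\ge1$, $\tau(x)\le\tau(y)$. Hence
\begin{align*}
g_0(y)-g_0(x)=\underbrace{E\sum_{j=0}^{\tau(y)-1}\bigl(r_c(X_j(y))-r_c(X_j(x))\bigr)}_{A}\;+\;\underbrace{E\sum_{j=\tau(x)}^{\tau(y)-1}r_c(X_j(x))}_{B}.
\end{align*}
Because $r$, and therefore $r_c$, is non-decreasing while $X_j(y)\ge X_j(x)$, the integrand in $A$ is non-negative, so $A\ge0$; it remains to show $B=0$.

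For $B$, I would decompose the interval $[\tau(x),\tau(y))$ into complete excursions of $X(x)$ away from $0$. Since $X_{\tau(y)}(x)=0$, writing $\tau(x)=\tau^{(0)}<\tau^{(1)}<\cdots$ for the successive positive times at which $X(x)=0$ gives $\tau(y)=\tau^{(N)}$ for a random $N\ge0$, so that $B=E\sum_{k=0}^{N-1}R_k$ with $R_k=\sum_{j=\tau^{(k)}}^{\tau^{(k+1)}-1}r_c(X_j(x))$. By the strong Markov property of $(U_n)$ at $\tau^{(k)}$, $R_k$ is independent of $\mathcal{G}_k:=\mathcal{F}^U_{\tau^{(k)}}$ and has the law of an excursion reward of $X$ from $0$, whose mean is $E_0\sum_{j=0}^{\tau-1}r_c(X_j)=0$ by the renewal--reward identity $\pi r=E_0[\sum_{j=0}^{\tau-1}r(X_j)]/E_0[\tau]$. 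Crucially, $N$ is a $(\mathcal{G}_k)$-stopping time, because $\{N\le k\}$ is the event that $X(y)$ has reached $0$ by time $\tau^{(k)}$, and $X(y)$ is a measurable functional of $(U_n)$. Since $\{N>k\}\in\mathcal{G}_k$ and $R_k$ is mean-zero and independent of $\mathcal{G}_k$, each term $E[R_k\mathbf{1}\{N>k\}]$ vanishes, and a Wald-type interchange --- legitimate since $E|R_0|=E_0\sum_{j=0}^{\tau-1}|r_c(X_j)|=E_0[\tau]\cdot\pi(|r_c|)<\infty$ and $EN\le E\tau(y)=E_y\tau<\infty$ by positive recurrence --- gives $B=\sum_{k\ge0}E[R_k\mathbf{1}\{N>k\}]=0$. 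Therefore $g_0(y)-g_0(x)=A\ge0$, which is the claim.

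The delicate point, and the one I would write out most carefully, is the treatment of $B$: the lower chain $X(x)$ typically completes several full excursions from $0$ while the upper chain $X(y)$ is still away from $0$, and one must argue that these contribute nothing in expectation. What makes this work is that the number $N$ of such excursions, although it is governed by the path of the \emph{upper} chain, is nonetheless a stopping time relative to the filtration generated by the common driving sequence $(U_n)$, so Wald's identity applies with the mean-zero excursion rewards. All the remaining ingredients --- finiteness of $g_0(x)$, of $EN$, and of $E|R_0|$, and hence the validity of the interchanges of sum and expectation, together with the measurability claims for $\{N\le k\}$ --- are routine consequences of positive recurrence and the $\pi$-integrability of $r$, the first of these already being part of the results quoted from \citet{glynnSolvingPoissonEquation2022}.
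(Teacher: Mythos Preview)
Your argument is correct and reaches the same conclusion as the paper, but by a genuinely different route. Both proofs rest on the coupling and on the key observation that $X_{\tau(y)}(x)=0$, so that the lower chain has completed an integer number of excursions from $0$ by time $\tau(y)$. From there the paper works with the martingales $M_n(x)$ and $M_n(y)$ and applies the optional sampling theorem at the random time $\tau(y)$; the bulk of its proof is the uniform integrability needed to justify that step (a domination of $\sum_{j<\tau(y)}|r_c(X_j(x))|$ by the $y$-chain quantities, plus a separate argument that $E|g_0(X_{\tau(y)\wedge n}(x))|\to |g_0(0)|$). Optional sampling then yields $g_0(x)=E\sum_{j=0}^{\tau(y)-1}r_c(X_j(x))$ directly, which in your notation is exactly the statement $B=0$. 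You instead obtain $B=0$ by an excursion decomposition and a Wald-type identity, exploiting that although $N$ is determined by the \emph{upper} chain, it is a stopping time for the filtration of the common driving sequence $(U_n)$, while each $R_k$ is a mean-zero excursion reward independent of that past. Your route is more elementary --- it replaces the uniform-integrability analysis by the simple moment bounds $E|R_0|<\infty$ and $EN\le E_y\tau<\infty$ --- and is perfectly adequate here because $S=\mathbb{Z}_+$ has a genuine recurrent atom at $0$. The paper's martingale/optional-sampling framework, on the other hand, is what carries over cleanly to the continuous state-space result (their Theorem~\ref{thm3}), where one has only the split-chain regeneration rather than returns to a fixed point, so the excursion-from-$0$ decomposition is not directly available.
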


\begin{proof} Fix $z=0\leq x<y$. Expressed in terms of our coupling, the martingale structure of $(M_n:n\geq 0)$ implies that
\begin{align*}
M_n(x)\overset{\Delta}{=} g_0(X_n(x)) + \sum_{j=0}^{n-1}r_c(X_j(x))
\end{align*}
and
\begin{align*}
M_n(y) \overset{\Delta}{=}g_0(X_n(y)) + \sum_{j=0}^{n-1}r_c(X_j(y))
\end{align*}
are both martingales adapted to $(\mathcal{G}_n:n\geq 0)$, where $\mathcal{G}_n=\sigma(X_j(x), X_j(y):0\leq j\leq n)$. For $w\in S$, let $\tau(w)=\inf\{n\geq 0: X_n(w)=0\}$. We now wish to apply optional sampling at time $\tau(y)$ to $(M_n(x):n\geq 0)$ and $(M_n(y):n\geq 0)$. Assuming temporarily that optional sampling can be applied, we find that
\begin{align}
g_0(x) = Eg_0(X_{\tau(y)}(x)) + E\sum_{j=0}^{\tau(y)-1}r_c(X_j(x))\label{eq24}
\end{align}
and
\begin{align}
g_0(y) = Eg_0(X_{\tau(y)}(y)) + E\sum_{j=0}^{\tau(y)-1}r_c(X_j(y)).\label{eq25}
\end{align}
But under our coupling, $X_{\tau(y)}(x)\leq X_{\tau(y)}(y)=0$. Since $S=\mathbb{Z}_+$, $X_{\tau(y)}(x)\geq 0$, so it follows that $X_{\tau(y)}(x)=0$. So,
\begin{align*}
g_0(x)-g_0(0) = E\sum_{j=0}^{\tau(y)-1}r_c(X_j(x))
\end{align*}
and
\begin{align*}
g_0(y)-g_0(0) = E\sum_{j=0}^{\tau(y)-1}r_c(X_j(y)).
\end{align*}
The monotonicity of $r_c$ implies that
\begin{align}
\sum_{j=0}^{\tau(y)-1}r_c(X_j(x)) \leq \sum_{j=0}^{\tau(y)-1}r_c(X_j(y)),\label{eq26}
\end{align}
thereby implying that $g_0(x)\leq g_0(y)$.

We now need to establish the validity of optional sampling. Note that $\tau(y)\land n \overset{\Delta}{=}\min(n,\tau(y))$ is a bounded stopping time, so it follows from the optional sampling theorem (see, for example, Theorem 6.2.2 in \citet{rossStochasticProcesses1996}) that
\begin{align}
EM_{\tau(y)\land n}(x) = EM_0(x) = g_0(x).\label{eq27}
\end{align}
The validity of \eqref{eq24} follows from \eqref{eq27}, once we prove that $(M_{\tau(y)\land n}:n\geq 0)$ is a uniformly integrable sequence. Note that
\begin{align}
|M_{\tau(y)\land n}(x)|\leq |g_0(X_{\tau(y)\land n}(x))| + \sum_{j=0}^{\tau(y)-1}|r_c(X_j(x))|.\label{eq28}
\end{align}
Because $r_c$ is non-decreasing, $r_c$ can be expressed as $r_c(x)=\tilde r_c(x)+r_c(0)$, where $\tilde r_c$ is non-decreasing and non-negative. Hence,
\begin{align*}
\sum_{j=0}^{\tau(y)-1}|r_c(X_j(x))|&\leq \sum_{j=0}^{\tau(y)-1}\tilde r_c(X_j(x)) + |r_c(0)|\tau(y)\\
&\leq \sum_{j=0}^{\tau(y)-1}\tilde r_c(X_j(y)) + |r_c(0)| \tau(y)\\
&= \sum_{j=0}^{\tau(y)-1}|r_c(X_j(y))| + 2|r_c(0)|\tau(y).
\end{align*}
Recall that
\begin{align*}
E \sum_{j=0}^{\tau(y)-1}|r_c(X_j(y))| + 2|r_c(0)|\tau(y)&= E_y\sum_{j=0}^{\tau-1}|r_c(X_j)| + 2|r_c(0)|E_y\tau.
\end{align*}
The latter expectations are finite because $r_c$ is $\pi$-integrable and $X$ is positive recurrent; see Section 2 of \citet{glynnSolvingPoissonEquation2022} for details. Hence, the latter term on the right-hand side of \eqref{eq28} is integrable (and therefore trivially uniformly integrable).

We now argue that the other term appearing on the right-hand side of \eqref{eq28}, namely $(g_0(X_{\tau(y)\land n}):n\geq 0)$, is also uniformly integrable. Note that $|g_0(X_{\tau(y)\land n}(x))|\rightarrow |g_0(X_{\tau(y)}(x))|$ a.s. as $n\rightarrow\infty$. As argued earlier, $X_{\tau(y)}(x)=0$, so uniform integrability follows (see Theorem 4.6.3 of \citet{durrettProbabilityTheoryExamples2019}) if
\begin{align*}
 E|g_0(X_{\tau(y)\land n}(x))|\rightarrow |g_0(0)|=0
 \end{align*}
 as $n\rightarrow\infty$, which is a consequence of establishing that
 \begin{align}
 E|g_0(X_n(x))|I(\tau(y)\geq n)\rightarrow 0\label{eq29}
 \end{align}
 as $n\rightarrow\infty$.

 Let $\beta_n(x)=\inf\{j>n:X_j(x)=0\}$, and observe that
 \begin{align*}
 g_0(X_n(x))= E[\sum_{j=n}^{\beta_n(x)-1}r_c(X_j(x))|X_n(x)].
 \end{align*}
 In view of the fact that $\beta_n(x)\leq\beta_n(y)$, the left-hand side of \eqref{eq29} can be upper bounded by
 \begin{align*}
 E\sum_{j=n}^{\beta_n(x)-1}|r_c(X_j(x))|I(\tau(y)>n)&\leq E \sum_{j=n}^{\beta_n(x)-1}|r_c(X_j(y))|I(\tau(y)>n) \\
 &\leq E\sum_{j=n}^{\beta_n(y)-1}|r_c(X_j(y))|I(\tau(y)>n)\\
 &=E\sum_{j=n}^{\tau(y)-1}|r_c(X_j(y))|I(\tau(y)>n)\\
 &=E_y \sum_{j=n}^{\tau-1}|r_c(X_j)|I(\tau>n)\rightarrow 0,
 \end{align*}
 proving \eqref{eq29}. A similar (but easier) argument proves \eqref{eq25}, thereby proving the theorem.
 \end{proof}

We finish this section by developing the corresponding theory when the state space $S$ is a continuous state space. In particular, we assume that $S=\mathbb{R}_+$. Our first result concerns the case where $X$ is a positive recurrent Harris chain. Specifically, we will invoke the following assumption
\begin{assumption} \label{a1}\begin{adjustwidth}{50pt}{50pt} \hspace{-0.825cm}There exist $b,\lambda>0$, a probability $\phi$ on $\mathbb{R}_+$, and non-negative functions $v_i:\mathbb{R}_+\rightarrow\mathbb{R}_+$ $(i=1,2)$ such that
\begin{enumerate}[label=\alph*)]
\item $\sup\{E_xv_i(X_i):0\leq x\leq b\}<\infty, \ i=1,2$;
\item $\sup\{|r(x)|:0\leq x\leq b\}<\infty$;
\item $E_xv_1(X_1)\leq v_1(x)-1,\  x>b$;
\item $E_xv_2(X_1)\leq v_2(x)-|r(x)|,\  x>b$;
\item $P_x(X_1\in \cdot)\geq \lambda\phi(\cdot),\ x\in[0,b]$.
\end{enumerate}
\end{adjustwidth}
\end{assumption}
\citet{athreyaNewApproachLimit1978} and \citet{nummelinSplittingTechniqueHarris1978} noted that condition e) allows one to ``split'' the transition kernel $P$ over $[0,b]$, so that one can then write
\begin{align}
P_x(X_1\in \cdot)= \lambda \phi(\cdot) + (1-\lambda)Q(x,\cdot)\label{eq210}
\end{align}
for $x\in[0,b]$, where $Q(x,\cdot)$ is a probability on $S$ for each $x\in[0,b]$. The mixture \eqref{eq210} then allows one to interpret transitions from $x\in[0,b]$ in terms of a ``randomization''. In particular, if $X_n\in[0,b]$, then $X_{n+1}$ is distributed according to $\phi$ with probability $\lambda$ and distributed according to $Q(X_n,\cdot)$ with probability $1-\lambda$. Every time that $X$ distributes itself according to $\phi$, the Markov chain ``regenerates.''


Let $\tau$ be the first time that $X$ distributes itself according to $\phi$. We are now ready to state a theorem that provides a continuous state space analog to the discrete state space representation \eqref{eq23} for the solution of Poisson's equation.

\begin{theorem} Let $X=(X_n:n\geq 0)$ be a Markov chain satisfying \autoref{a1}. Then, $X$ possesses a unique stationary distribution $\pi$ and $r$ is $\pi$-integrable. If $r_c(x)=r(x)-\pi r$ for $x\geq 0$, then
\begin{align*}
E_x\sum_{j=0}^{\tau-1}|r_c(X_j)|<\infty
\end{align*}
for each $x\in \mathbb{R}_+$ and
\begin{align*}
\tilde g(x)=E_x\sum_{j=0}^{\tau-1}r_c(X_j)
\end{align*}
is a finite-valued solution of Poisson's equation for which
\begin{align*}
\tilde g(X_n) + \sum_{j=0}^{n-1}r_c(X_j)
\end{align*}
is a $P_x$-martingale for each $x\in \mathbb{R}_+$.
\end{theorem}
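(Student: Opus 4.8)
The plan is to run the Athreya--Ney--Nummelin splitting construction and then exploit the resulting regeneration structure, exactly as in the discrete-atom setting behind \eqref{eq23}. Condition e) of \autoref{a1} gives the mixture \eqref{eq210}, and I would enrich the probability space with the Bernoulli$(\lambda)$ coin flips that, at each visit of $X$ to $[0,b]$, decide whether the next draw is from $\phi$ (a ``regeneration'') or from $Q$; with this, $\tau$ is the first regeneration time, $\tau\ge 1$ always, and $\tau\ge 2$ whenever $X_0\notin[0,b]$. I would first settle the structural claims. Conditions a) and c) form a Foster--Lyapunov pair for $[0,b]$: c) gives negative drift of $v_1$ off $[0,b]$ while a) (with b)) controls the relevant quantities on $[0,b]$, so the comparison theorem yields $E_x\sigma_{[0,b]}<\infty$ for every $x\in\mathbb{R}_+$, where $\sigma_{[0,b]}$ is the hitting time of $[0,b]$; since each visit to $[0,b]$ independently regenerates with probability $\lambda$, this upgrades to $E_x\tau<\infty$ for every $x$, in particular $E_\phi\tau<\infty$. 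The chain is then regenerative with i.i.d.\ cycles of finite mean length, hence has a unique stationary distribution, given by the ratio formula $\pi(A)=E_\phi\sum_{j=0}^{\tau-1}I(X_j\in A)/E_\phi\tau$.

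For $\pi$-integrability of $r$, conditions b) and d) form a second Foster--Lyapunov pair, now with ``cost'' $|r|$: applying the comparison theorem to $v_2$ along the path up to $\tau$ yields $E_x\sum_{j=0}^{\tau-1}|r(X_j)|<\infty$ for every $x$ (using a) and b) to absorb the steps spent in $[0,b]$), and taking $X_0\sim\phi$ in the ratio formula gives $\pi|r|=E_\phi\sum_{j=0}^{\tau-1}|r(X_j)|/E_\phi\tau<\infty$. Since $|r_c|\le|r|+|\pi r|$ and $E_x\tau<\infty$, it follows that $E_x\sum_{j=0}^{\tau-1}|r_c(X_j)|<\infty$ for every $x$, so $\tilde g$ is finite-valued. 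A key bookkeeping identity, again from the ratio formula, is $\int\phi(dw)\,\tilde g(w)=E_\phi\sum_{j=0}^{\tau-1}r_c(X_j)=E_\phi\sum_{j=0}^{\tau-1}r(X_j)-\pi r\cdot E_\phi\tau=0$.

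Poisson's equation then follows by first-step analysis. If $x\notin[0,b]$, the first step is an ordinary $P$-transition and $\tau\ge 2$, so by the Markov property $\tilde g(x)=r_c(x)+E_x\tilde g(X_1)$, i.e.\ $(P-I)\tilde g(x)=-r_c(x)$. If $x\in[0,b]$, conditioning on the coin at time $0$ gives $\tilde g(x)=r_c(x)+(1-\lambda)\int Q(x,dw)\,\tilde g(w)$ (on ``heads'' the cycle ends immediately, contributing only $r_c(x)$); writing $E_x\tilde g(X_1)=\lambda\int\phi(dw)\,\tilde g(w)+(1-\lambda)\int Q(x,dw)\,\tilde g(w)$ and invoking $\int\phi\,\tilde g=0$ again gives $E_x\tilde g(X_1)=\tilde g(x)-r_c(x)$. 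Hence $(P-I)\tilde g=-r_c$ on $\mathbb{R}_+$. The martingale assertion is then the one-line computation $E_x[\tilde g(X_{n+1})+\sum_{j=0}^{n}r_c(X_j)\mid\mathcal{F}_n]=E_{X_n}\tilde g(X_1)+\sum_{j=0}^{n}r_c(X_j)=\tilde g(X_n)+\sum_{j=0}^{n-1}r_c(X_j)$, valid once each $M_n$ is integrable; integrability of $\tilde g(X_n)$ reduces, via $|\tilde g(X_n)|\le E[\sum_{j=n}^{\beta_n-1}|r_c(X_j)|\mid\mathcal{F}_n]$ with $\beta_n$ the first regeneration after $n$, together with a Wald/renewal bound, to the cycle-sum estimates already in hand.

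The step I expect to be the main obstacle is converting the one-sided drift inequalities c) and d) --- which say nothing on $[0,b]$ --- into genuine finiteness of $E_x\sum_{j=0}^{\tau-1}|r(X_j)|$ and of $E_x|\tilde g(X_n)|$ starting from an arbitrary, possibly large, initial state, handling carefully the interplay between the drift region and the boundary set $[0,b]$, where one leans on a) and b) (note the slightly unusual form $E_xv_i(X_i)$ appearing in a)). This is a standard Foster--Lyapunov/comparison-theorem exercise, but it requires care; an alternative that offloads much of it is to transfer the whole problem to the split chain, where $[0,b]\times\{1\}$ is a genuine recurrent atom and $\tau$ its first hitting time, and to invoke the discrete-atom results of \citet{glynnSolvingPoissonEquation2022} for the integrability and martingale facts, after checking that $\tilde g$ coincides with the corresponding atomic solution of Poisson's equation.
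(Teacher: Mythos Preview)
Your proposal is correct and follows essentially the same route as the paper: Athreya--Ney--Nummelin splitting, Foster--Lyapunov/Comparison-theorem bounds (the paper outsources these to Meyn--Tweedie, citing Theorems 11.3.4 and 14.3.7 and the Comparison Theorem, and makes explicit the identity $E_x\sum_{j=0}^{\tau-1}I(X_j\le b)=1/\lambda$), the regenerative ratio identity giving $\int\phi\,\tilde g=0$, first-step analysis for Poisson's equation split into the cases $x>b$ and $x\le b$, and the bound $|\tilde g(X_n)|I(\tau>n)\le E[\sum_{j=n}^{\beta_n-1}|r_c(X_j)|\mid\mathcal F_n]I(\tau>n)$ for integrability. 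The only presentational difference is that for $P_x$-integrability of $M_n$ the paper uses a renewal-recursion induction (conditioning on whether $\tau\le n$, their (2.16)--(2.17)) rather than a single Wald-type bound, but this is the same decomposition you sketch.
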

\begin{proof} Condition e) implies that $[0,b]$ is a small set, in the terminology of \citet{meynMarkovChainsStochastic2012}. In view of conditions a) and c), we may apply Theorem 11.3.4 of \citet{meynMarkovChainsStochastic2012}, thereby ensuring that $X$ has a unique stationary distribution $\pi$. Furthermore, conditions a), b), and d), together with Theorem 14.3.7 of \citet{meynMarkovChainsStochastic2012} imply that
\begin{align*}
\int_{\mathbb{R}_+}^{}\pi(dx)|r(x)|\leq \sup\{E_xv_2(X_1)-v_2(x)+|r(x)|: 0\leq x\leq b\}<\infty.
\end{align*}
Since $\pi r$ is finite-valued, we may put $r_c(x)=r(x)-\pi r$. 

In view of conditions a)-d), we may apply the Comparison Theorem (p. 344, \citet{meynMarkovChainsStochastic2012}) to conclude that
\begin{align}
E_x\sum_{j=0}^{\tau-1}|r_c(X_j)| \leq v_2(x)+ |\pi r| v_1(x)+\beta E_x\sum_{j=0}^{\tau-1}I(X_j\leq b)\label{eq211}
\end{align}
where
\begin{align*}
\beta=\sup\{|\pi r|v_1(x)+v_2(x):0\leq x\leq b\}.
\end{align*}
With the aid of the splitting idea discussed earlier, we see that
\begin{align*}
P_x(\tau>T_{k+1}|X_0,...,X_{T_k})=(1-\lambda),
\end{align*}
where $T_k$ is the time step at which $X$ visits $[0,b]$ for the $k$'th time, and hence
\begin{align*}
P(\tau>T_k) = (1-\lambda)^k.
\end{align*}
It follows that
\begin{align*}
E_x \sum_{j=0}^{\tau-1}I(X_j\leq b) = E_x \sum_{j=1}^{\infty}I(\tau>T_j) = \frac{1}{\lambda}.
\end{align*}
Consequently, \eqref{eq211} implies that
\begin{align}
E_x\sum_{j=0}^{\tau-1}|r_c(X_j)|<\infty\label{eq212}
\end{align}
for all $x\in \mathbb{R}$.

With the knowledge that $\tilde g$ is finite-valued, then by conditioning on $X_1$, we find that
\begin{align}
\tilde g(x)=r_c(x) + \int_{\mathbb R_+}^{}\tilde g(y)P_x(X_1\in dy)\label{eq213}
\end{align}
for $x>b$, whereas
\begin{align}
\tilde g(x) = r_c(x) + (1-\lambda)\int_{\mathbb{R}_+}^{}\tilde g(y)Q(x,dy)\label{eq214}
\end{align}
for $x\leq b$.

Since $X$ regenerates at time $\tau$ and has distribution $\phi$ at that time, it is well known (see, for example, \citet{smithRegenerativeStochasticProcesses1955}) that
\begin{align}
0=\pi r_c=\frac{E_\phi\sum_{j=0}^{\tau-1}r_c(X_j)}{E_\phi \tau} = \frac{\int_{\mathbb{R_+}}^{}\phi(dx)\tilde g(x)}{E_\phi \tau},\label{eq215}
\end{align}
(where $E_\phi(\cdot)$ is the expectation under which $X_0$ has distribution $\phi$) so that \eqref{eq214} becomes
\begin{align*}
\tilde g(x)&=r_c(x) + (1-\lambda)\int_{\mathbb R_+}^{}\tilde g(y) Q(x,dy)+ \lambda\int_{\mathbb R_+}^{}\tilde g(y)\phi(dy)\\
&=r_c(x) + \int_{\mathbb{R}_+}^{}\tilde g(y) P_x(X_1\in dy),
\end{align*}
proving that Poisson's equation is solved by $\tilde g$.

To prove the martingale property, we note that
\begin{align}
E_x|r_c(X_n)| = E_x |r_c(X_n)|I(\tau>n) + \sum_{j=1}^{n}P_x(\tau=j)E_\phi|r_c(X_{n-j})|\label{eq216}
\end{align}
and 
\begin{align}
E_\phi |r_c(X_n)| = E_\phi |r_c(X_n)| I(\tau>n) + \sum_{j=1}^{n}P_\phi(\tau=j)E_\phi |r_c(X_{n-j})|\label{eq217}
\end{align}
(where $P_\phi(\cdot)$ is the probability under which $X_0$ has distribution $\phi$). But the $\pi$-integrability of $r$ ensures that
\begin{align}
E_\phi \sum_{j=0}^{\tau-1}|r_c(X_j)| <\infty\label{eq218}
\end{align}
(using the same regenerative identity as in \eqref{eq215}), so that $E_\phi|r_c(X_n)|I(\tau>n)<\infty$ for $n\geq 0$. Also, \eqref{eq212} guarantees that $E_x|r_c(X_n)|I(\tau>n)<\infty$ for $n\geq 0$. It follows from an induction based on the recursions \eqref{eq216} and \eqref{eq217} that $r_c(X_n)$ is $P_x$-integrable for $n\geq 0$.

We can similarly analyze the $P_x$-integrability of $\tilde g(X_n)$. The same inductive argument, based on equations analogous to \eqref{eq216} and \eqref{eq217}, establishes the integrability provided that we show
\begin{align*}
E_x|\tilde g(X_n)|I(\tau>n)<\infty
\end{align*}
and
\begin{align*}
E_\phi|\tilde g(X_n)| I(\tau>n) <\infty
\end{align*}
for $n\geq 0$. Set $\beta_n=\inf\{j>n: X \text{ regenerates at time }j\}$. Then,
\begin{align*}
E_x|\tilde g(X_n)| I(\tau>n) &= E_x |\sum_{j=n}^{\beta_n-1}r_c(X_j)| I(\tau>n)\\
&\leq E_x \sum_{j=n}^{\beta_n-1}|r_c(X_j)|I(\tau>n)\\
&=E_x\sum_{j=n}^{\tau-1}|r_c(X_j)| I(\tau>n)<\infty,
\end{align*}
due to \eqref{eq212}. Similarly, 
\begin{align*}
E_\phi |\tilde g(X_n)| I(\tau>n) \leq E_\phi \sum_{j=n}^{\tau-1}|r_c(X_j)|I(\tau>n)<\infty,
\end{align*}
due to \eqref{eq218}. Hence,
\begin{align*}
\tilde g(X_n) + \sum_{j=0}^{n-1}r_c(X_j)
\end{align*}
is $P_x$-integrable for $n\geq 0$. The martingale property then follows easily from the fact that $\tilde g$ solves Poisson's equation.
\end{proof}

We now proceed to prove that $\tilde g$ is monotone when $X$ and $r$ are suitably monotone.

\begin{theorem} \label{thm3} Suppose that $X$ is a stochastically monotone Markov chain satisfying \autoref{a1}. If the function $r$ appearing in \autoref{a1} is non-decreasing, then $\tilde g$ is non-decreasing.
\end{theorem}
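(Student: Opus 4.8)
The plan is to push the coupling argument behind Theorem~\ref{thm1} through the splitting construction. Fix $0\le x<y$. The object to build is a pair of processes $(X_n(x),X_n(y))_{n\ge0}$, the second equipped with regeneration marks, on one probability space driven by independent uniform innovations, such that simultaneously: (i) $X_n(x)\le X_n(y)$ for every $n$; (ii) marginally $(X_n(x))_n$ is a $P$-chain started at $x$, while $(X_n(y))_n$ with its marks is a bona fide split chain started at $y$, so that its first regeneration time $\tau(y)$ has the $P_y$-law of $\tau$; and (iii) $X_{\tau(y)}(x)=X_{\tau(y)}(y)$, i.e.\ the chains have coalesced by time $\tau(y)$. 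Given such a coupling, both $M_n(x):=\tilde g(X_n(x))+\sum_{j<n}r_c(X_j(x))$ and $M_n(y):=\tilde g(X_n(y))+\sum_{j<n}r_c(X_j(y))$ are martingales for the coupling filtration (by the preceding theorem, since $\tilde g$ solves Poisson's equation and each marginal is a $P$-chain), and optional sampling at $\tau(y)$, whose validity I address below, yields
\begin{align*}
\tilde g(x)&=E\,\tilde g(X_{\tau(y)}(x))+E\sum_{j=0}^{\tau(y)-1}r_c(X_j(x)),\\
\tilde g(y)&=E\,\tilde g(X_{\tau(y)}(y))+E\sum_{j=0}^{\tau(y)-1}r_c(X_j(y)).
\end{align*}
Subtracting, the $\tilde g$-terms cancel by (iii), and since $r_c$ is non-decreasing and $X_j(x)\le X_j(y)$ the reward sums are ordered; hence $\tilde g(x)\le\tilde g(y)$, and since $0\le x<y$ were arbitrary, $\tilde g$ is non-decreasing.

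I would define the coupling by cases on $X_n(y)$. If $X_n(y)>b$, advance both chains by the monotone quantile coupling $X_{n+1}(w)=F^{-1}(X_n(w),U_{n+1})$, $w\in\{x,y\}$; monotonicity of $F^{-1}(\cdot,u)$ preserves the order. If $X_n(y)\le b$ (so $X_n(x)\le b$ too), flip one coin of success probability $\lambda$: on success set both chains equal to a single draw from $\phi$, so they coalesce; on failure advance both by the monotone quantile coupling of $Q(X_n(x),\cdot)$ and $Q(X_n(y),\cdot)$, which is order-preserving because \eqref{eq210} together with the stochastic monotonicity of $P$ forces $Q(\cdot,\cdot)$ to be stochastically monotone on $[0,b]$ (indeed $Q(a,(t,\infty))=(1-\lambda)^{-1}(P_a(X_1>t)-\lambda\phi((t,\infty)))$ is non-decreasing in $a$). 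In every case the conditional law of $X_{n+1}(x)$ given the past is $P_{X_n(x)}(\cdot)$, so $(X_n(x))_n$ is a $P$-chain. Finally, in the one case where $X_n(x)\le b<X_n(y)$, I would additionally declare the already-realized step of the $x$-chain to be a regeneration by an auxiliary thinning, flagging it with the state-dependent probability $\lambda\,(d\phi/dP_{X_n(x)})(X_{n+1}(x))\le1$; this makes $(X_n(x))_n$ with its marks a genuine split chain without disturbing its values. Since a regeneration of the $y$-chain can occur only from $[0,b]$, where the two chains share the coin, every regeneration time of the $y$-chain is also a regeneration time of the $x$-chain; in particular $X_{\tau(y)}(x)=X_{\tau(y)}(y)$, and writing $\beta_n(w)$ for the first regeneration of chain $w$ strictly after time $n$, we have $\beta_n(x)\le\beta_n(y)$ and $\beta_n(y)=\tau(y)$ on $\{\tau(y)>n\}$.

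It remains to justify optional sampling, i.e.\ uniform integrability of $(M_{\tau(y)\wedge n}(x))_n$ and $(M_{\tau(y)\wedge n}(y))_n$, and here I would follow the proof of Theorem~\ref{thm1} almost verbatim. Writing $r_c=\tilde r_c+r_c(0)$ with $\tilde r_c\ge0$ non-decreasing, the reward parts are dominated by the integrable variable $\sum_{j<\tau(y)}|r_c(X_j(y))|+2|r_c(0)|\,\tau(y)$: its first piece is integrable by \eqref{eq212}, and $E_y\tau<\infty$ follows — as with \eqref{eq211} — from the Comparison Theorem applied with $f\equiv1$ together with the identity $E_x\sum_{j<\tau}I(X_j\le b)=1/\lambda$. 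For the $\tilde g$-parts one uses the same uniform-integrability criterion as in Theorem~\ref{thm1}, noting $X_{\tau(y)\wedge n}(x)\to X_{\tau(y)}(x)=X_{\tau(y)}(y)$ a.s.; the only real estimate is $E\,|\tilde g(X_n(x))|\,I(\tau(y)>n)\to0$. This I would get by writing $\tilde g(X_n(x))$ as the conditional expectation over the $x$-chain's split cycle $[n,\beta_n(x))$, bounding $|r_c(X_j(x))|\le\tilde r_c(X_j(y))+|r_c(0)|$, enlarging $\beta_n(x)$ to $\beta_n(y)$ (legitimate because the bounding summand is non-negative), using $\beta_n(y)=\tau(y)$ on $\{\tau(y)>n\}$, and invoking \eqref{eq212}, \eqref{eq218} and $E_y\tau<\infty$. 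The corresponding claims for the $y$-chain are easier, since there $\tau(y)$ is the chain's own first regeneration.

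I expect the main obstacle to be the coupling construction, and within it the case $X_n(x)\le b<X_n(y)$. In that configuration the $x$-chain is eligible to regenerate and redraw from $\phi$, yet an independent $\phi$-draw can overshoot the $y$-chain's plain $P$-transition and break the order, so one cannot simply splice the splitting mechanism of the $x$-chain onto the monotone coupling. The resolution is to separate ``taking the step'' from ``calling it a regeneration'': advance the $x$-chain by the monotone coupling of $P_{X_n(x)}$ with $P_{X_n(y)}$, which respects the order automatically, and only afterwards decide by a state-dependent thinning whether that step counts as a regeneration. This a-posteriori marking — which still reproduces the split-chain law of the $x$-chain while guaranteeing that its regenerations include all of the $y$-chain's — is the one ingredient genuinely new relative to Theorem~\ref{thm1}; the remaining steps are bookkeeping modeled on that proof.
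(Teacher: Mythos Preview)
Your proposal is correct and follows essentially the same route as the paper's proof: the coupling you describe (monotone quantile coupling via $F^{-1}$ outside the small set, shared $\lambda$-coin with a common $\phi$-draw versus the monotone quantile coupling of $Q$ when both chains lie in $[0,b]$, and a posteriori thinning of the $x$-chain's step when $X_n(x)\le b<X_n(y)$) is exactly the modified coupling the paper constructs, and your optional-sampling and uniform-integrability steps mirror those of Theorem~\ref{thm1}, which is precisely what the paper invokes. If anything, you spell out more of the bookkeeping (the stochastic monotonicity of $Q$, the inequality $\beta_n(x)\le\beta_n(y)$, and the tail estimate for $E|\tilde g(X_n(x))|I(\tau(y)>n)$) than the paper, which simply declares the uniform-integrability argument ``essentially identical'' to Theorem~\ref{thm1} and omits it.
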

\begin{proof} To prove this result, we modify the coupling discussed earlier. In particular, for $0\leq x\leq y$, we modify the dynamics of $((X_n(x),X_n(y)):n\geq 0)$ when $X_n(y)\leq b$.

For $w\in\mathbb{R}_+$ and $v\in[0,b]$, put
\begin{align*}
G(v,w) = \frac{F(v,w)-\lambda\phi([0,w])}{1-\lambda},
\end{align*}
and note that $1-G(\cdot,w)$ is non-decreasing for each $w\geq 0$ (since this is also true of $1-F(\cdot,w)$).

When $X_n(x)\leq X_n(y)\leq b$, we distribute $X_{n+1}(y)$ according to $\phi$ with probability $\lambda$ and put $X_{n+1}(x)=X_{n+1}(y)$. Otherwise, with probability $1-\lambda$, put $X_{n+1}(x)=G^{-1}(X_n(x), U_{n+1})$ and $X_{n+1}(y)=G^{-1}(X_n(y), U_{n+1})$. This modified coupling preserves the distribution of $X$ under $P_x$ and the distribution of $(X,\tau)$ under $P_y$ while maintaining the ordering $X_n(x)\leq X_n(y)$ for $n\geq 0$ and forcing $X_\tau(x)$ to equal $X_\tau(y)$.

Because $X(x)$ may visit $[0,b]$ earlier than $X(y)$, it may regenerate and distribute itself according to $\phi$ at a time $\tau'$ earlier than $\tau$. In particular, when $X_n(x)\leq b<X_n(y)$ (so that $X(x)$ has the potential to regenerate at time $n+1$, but $X(y)$ does not), we put $X_{n+1}(x)=F^{-1}(X_n(x),U_{n+1})$ and $X_{n+1}(y)=F^{-1}(X_n(y), U_{n+1})$. A regeneration for $X(x)$ occurs at time $n+1$ with probability $w(X_n(x), X_{n+1}(x))$, where
\begin{align*}
w(x,y) = \lambda\left[\frac{d\phi}{dP_x(X_1\in \cdot)}\right](y)
\end{align*}
is the Radon-Nikodym derivative of $\lambda \phi$ with respect to $P_x(X_1\in \cdot)$ (which exists because $\phi$ must be absolutely continuous with respect to $P_x(X_1\in \cdot)$). Then, the distribution of $(X,\tau)$ under $P_x$ matches the distribution of $(X(x), \tau')$. 

The rest of the argument follows the proof used to verify Theorem \ref{thm1}. In particular, we consider the martingales
\begin{align}
\tilde g(X_n(x)) + \sum_{j=0}^{n-1}r_c(X_j(x))\label{eq219}
\end{align}
and
\begin{align}
\tilde g(X_n(y)) + \sum_{j=0}^{n-1}r_c(X_j(y)),\label{eq220}
\end{align}
analogously to the martingales $(M_n(x):n\geq 0)$ and $(M_n(y):n\geq 0)$ used in Theorem \ref{thm1}. The key is then to prove that optional sampling can be applied to \eqref{eq219} and \eqref{eq220} at time $\tau$. The associated uniform integrability argument is essentially identical to that used in Theorem \ref{thm1}, and is therefore omitted, proving the result.
\end{proof}

In some applications, it is important to know that the solution $\tilde g$ to Poisson's equation is continuous on $\mathbb R_+$.

\begin{proposition} Suppose that $X_n$ is a stochastically monotone Markov chain and that $F^{-1}(\cdot,y)$ is continuous on $\mathbb R_+$ for each $y\geq 0$. If the function $r$ appearing in \autoref{a1} is continuous and non-decreasing, then $\tilde g$ is continuous.
\end{proposition}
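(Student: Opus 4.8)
The plan is to prove right- and left-continuity of $\tilde g$ at an arbitrary point $x\ge 0$ separately. The naive route — couple $X(x)$ with $X(y)$ by the modified coupling of Theorem \ref{thm3} and let $y\downarrow x$ in $\tilde g(y)-\tilde g(x)=E\sum_{j=0}^{\tau(y)-1}[r_c(X_j(y))-r_c(X_j(x))]$ — is problematic, since both the coupling and the regeneration time $\tau(y)$ depend on $y$ and $\tau(y)$ need not vary continuously with $y$. Instead I would truncate the sum at a deterministic horizon $n$, treat that part with the \emph{plain} coupling $X_{n+1}(w)=F^{-1}(X_n(w),U_{n+1})$, $X_0(w)=w$ (realized on one probability space for all starting points), which is now monotone \emph{and} continuous in the starting point, and absorb the leftover $\tilde g(X_n)$-term into an error that I can control uniformly in $y$ using the monotonicity of $\tilde g$ from Theorem \ref{thm3}.

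Concretely, since $F^{-1}(\cdot,u)$ is non-decreasing and, by hypothesis, continuous for each $u$, induction on $n$ shows that for every sample point $w\mapsto X_n(w)$ is non-decreasing and continuous; hence $w\mapsto r_c(X_n(w))$ is continuous ($r$, and so $r_c$, being continuous) and $w\mapsto E_w\tilde g(X_n)$ is non-decreasing ($\tilde g$ being non-decreasing by Theorem \ref{thm3}). Iterating the identity $\tilde g(w)=r_c(w)+E_w\tilde g(X_1)$, valid for all $w\ge 0$ as shown when $\tilde g$ was proved to solve Poisson's equation, yields for every $n\ge 0$
\[
\tilde g(w)=E\sum_{j=0}^{n-1}r_c(X_j(w))+E\,\tilde g(X_n(w)),
\]
all terms being finite by the integrability established there.

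Fix $x$ and, for right-continuity, fix $y_0>x$. For $x<y\le y_0$, subtracting the displayed identity for $x$ from that for $y$ gives $\tilde g(y)-\tilde g(x)=A_n(y)+B_n(y)$, where $A_n(y)=E\sum_{j=0}^{n-1}[r_c(X_j(y))-r_c(X_j(x))]\ge 0$ and $B_n(y)=E_y\tilde g(X_n)-E_x\tilde g(X_n)\ge 0$. With $n$ held fixed, $A_n(y)\to 0$ as $y\downarrow x$ by dominated convergence: each of the finitely many summands is non-negative, tends to $0$ pointwise (continuity of $X_j(\cdot)$ and of $r$), and is dominated by $r_c(X_j(y_0))-r_c(X_j(x))$, which has finite expectation $E_{y_0}r_c(X_j)-E_xr_c(X_j)$. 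Monotonicity of $w\mapsto E_w\tilde g(X_n)$ gives $0\le B_n(y)\le B_n(y_0)=:\varepsilon_n$, a bound free of $y$, so $\limsup_{y\downarrow x}(\tilde g(y)-\tilde g(x))\le\varepsilon_n$ for every $n$. The symmetric argument, fixing $w_0<x$ and letting $w\uparrow x$ with $n$ fixed, gives $\limsup_{w\uparrow x}(\tilde g(x)-\tilde g(w))\le\varepsilon_n':=E_x\tilde g(X_n)-E_{w_0}\tilde g(X_n)$.

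The remaining and, I expect, main step is to show $\varepsilon_n\to 0$ and $\varepsilon_n'\to 0$ as $n\to\infty$; here I would bring back the coalescing coupling of Theorem \ref{thm3}, now only for the two fixed pairs $(x,y_0)$ and $(w_0,x)$. Running it for $(x,y_0)$ so that $X_n(x)\le X_n(y_0)$ for all $n$ and $X_{\tau}(x)=X_{\tau}(y_0)$, with $\tau=\tau(y_0)$ the $y_0$-chain's regeneration time, the two chains agree for $n\ge\tau$, whence
\[
\varepsilon_n=E\big[(\tilde g(X_n(y_0))-\tilde g(X_n(x)))\,I(\tau>n)\big]\le E\big[(|\tilde g(X_n(y_0))|+|\tilde g(X_n(x))|)\,I(\tau>n)\big].
\]
This is exactly the quantity controlled in the proof of Theorem \ref{thm1}: writing $\tilde g(X_n(w))=E[\sum_{j=n}^{\beta_n(w)-1}r_c(X_j(w))\mid\mathcal G_n]$ with $\beta_n(w)$ the first regeneration after time $n$, using $\beta_n(y_0)=\tau$ and $\beta_n(x)\le\tau$ on $\{\tau>n\}$, monotonicity of $r_c$, and the identity in law with the $y_0$-chain, one bounds it by $E_{y_0}\sum_{j=n}^{\tau-1}|r_c(X_j)|\,I(\tau>n)$, which tends to $0$ because $E_{y_0}\sum_{j=0}^{\tau-1}|r_c(X_j)|<\infty$. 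The identical estimate gives $\varepsilon_n'\to 0$. Letting $n\to\infty$ and using $\tilde g(w)\le\tilde g(x)\le\tilde g(y)$ for $w<x<y$ then yields $\lim_{y\downarrow x}\tilde g(y)=\tilde g(x)=\lim_{w\uparrow x}\tilde g(w)$, i.e., $\tilde g$ is continuous.
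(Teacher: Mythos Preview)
Your argument is correct, but it takes a different route from the paper's. The paper fixes a single upper bound $y$ with $x_n\le y$ for all $n$, runs the modified coupling of Theorem~\ref{thm3} simultaneously for $x$, $x_n$, and $y$, and applies the optional-sampling identity at the fixed stopping time $\tau=\tau(y)$ to get $\tilde g(x_n)-\tilde g(x)=E\sum_{j=0}^{\tau-1}[r_c(X_j(x_n))-r_c(X_j(x))]$; a single dominated-convergence step with dominator $2\sum_{j=0}^{\tau-1}|r_c(X_j(y))|+2\tau|r_c(0)|$ then finishes the proof. So the route you called ``naive'' is essentially the paper's, and your objection misses the mark: since $y$ (hence $\tau$) is held fixed while $x_n\to x$, nothing random varies with the parameter. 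Your approach instead truncates at a deterministic horizon $n$, handles the finite sum with the \emph{plain} $F^{-1}$-coupling (for which pathwise continuity in the initial state is immediate from the hypothesis), bounds the remainder $E_y\tilde g(X_n)-E_x\tilde g(X_n)$ uniformly in $y$ via the monotonicity of $\tilde g$, and invokes the modified coupling only for the fixed pair $(x,y_0)$ to kill the tail. This is longer, but it buys robustness: the paper's a.s.\ convergence step implicitly needs the modified coupling to be continuous in the starting point, and on the event $\{X_{j-1}(y)\le b,\text{ no regeneration}\}$ that coupling uses $G^{-1}(\cdot,u)$, whose continuity is not assumed. Your decomposition sidesteps that issue entirely.
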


\begin{proof} Suppose that $x_n\rightarrow x\geq 0$ as $n\rightarrow\infty$ with $x_n\leq y$ for all $n\geq 1$. The proof of \autoref{thm3} establishes that if $x<y$, then
\begin{align*}
\tilde g(x_n)-\tilde g(x) = E\sum_{j=0}^{\tau-1}[r_c(X_j(x_n))-r_c(X_j(x))].
\end{align*}
Since $r_c(F^{-1}(\cdot, U_n))$ is continuous, it follows that
\begin{align*}
\sum_{j=0}^{\tau-1}[r_c(X_j(x_n))-r_c(X_j(x))] \rightarrow 0\qquad \text{a.s.}
\end{align*}
as $n\rightarrow\infty$. Furthermore, since $r_c(X_j(\cdot))$ is non-decreasing,
\begin{align}
\sum_{j=0}^{\tau-1}|r_c(X_j(x_n))-r_c(X_j(x))| \leq 2\sum_{j=0}^{\tau-1}|r_c(X_j(y))| + 2 \tau|r_c(0)|.\label{eq221new}
\end{align}
The argument of \autoref{thm1} proves that \eqref{eq221new} is integrable, so that the Dominated Convergence Theorem applies. So $\tilde g(x_n)\rightarrow \tilde g(x)$ as $n\rightarrow\infty$, proving the continuity of $\tilde g$.
\end{proof}

Our final result concerns a class of Markov chains on $\mathbb{R}_+$ that need not be Harris recurrent and need not satisfy \autoref{a1}. In particular, we assume that for $0\leq x\leq y$, there exists $\rho<1$ such that
\begin{align}
E|F^{-1}(x,U_1) - F^{-1}(y,U_1)|^2\leq \rho |x-y|^2;\label{eq221}
\end{align}
such a Markov chain is said to be \emph{contractive on average}. Let $\kappa_n:\mathbb{R}_+\rightarrow\mathbb{R}_+$ be the random mapping defined by 
\begin{align*}
\kappa_n(x) = F^{-1}(x,U_n)
\end{align*}
for $n\geq 1$. Observe that
\begin{align*}
X_n(x)= (\kappa_n\circ \kappa_{n-1} \circ ...\circ\kappa_1)(x)
\end{align*}
has precisely the same distribution as
\begin{align*}
\tilde X_n(x) = (\kappa_1\circ\kappa_2\circ ... \circ \kappa_n)(x)
\end{align*}
for $n\geq 1$. If $X_0(x)=\tilde X_0(x), \tilde X_n(\cdot)$ enjoys the same monotonicity property as does $X_n(\cdot)$, so that $\tilde X_n(x)\leq \tilde X_n(y)$.

Assume that $r$ is Lipschitz, so that there exists $c<\infty$ for which
\begin{align*}
|r(x)-r(y)|\leq c^{1/2}|x-y|.
\end{align*}
Then, \eqref{eq221} implies that
\begin{align}
E|r(\tilde X_{k+1}(x))-r(\tilde X_k(x))|^2&\leq c E|\tilde X_{k+1}(x) - \tilde X_k(x)|^2\nonumber\\
&=c E|F^{-1}((\kappa_2\circ ...\circ \kappa_{k+1}(x), U_1))-F^{-1}((\kappa_2\circ...\circ\kappa_k)(x),U_1)|^2\nonumber\\
&\leq c\rho E|(\kappa_2\circ...\kappa_{k+1}(x))-(\kappa_2\circ ...\kappa_k)(x)|^2\nonumber\\
&=c \rho E|F^{-1}((\kappa_3\circ ...\circ\kappa_{k+1})(x),U_2) - F^{-1}((\kappa_3\circ...\circ \kappa_k)(x),U_2)|^2\nonumber\\
&\leq c\rho^2 E|(\kappa_3\circ ...\circ\kappa_{k+1})(x)-(\kappa_3\circ ...\circ\kappa_k)(x)|^2\nonumber\\
&\leq ...\nonumber\\
&\leq c\rho^kE|\kappa_{k+1}(x)-x|^2\nonumber\\
&=c\rho^kE|\kappa_1(x)-x|^2.\tag{\eqref{eq221}a}
\end{align}
Hence, if
\begin{align}
E_x|X_1-x|^2<\infty,\label{eq222}
\end{align}
$(r(\tilde X_k(x)):k\geq 0)$ is evidently a Cauchy sequence in the space of square integrable rv's and hence converges in mean square to a limit $r(\tilde X_\infty(x))$. \citet{diaconisIteratedRandomFunctions1999} show that \eqref{eq221} and \eqref{eq222} imply that $X$ has a unique stationary distribution $\pi$ and that $r(\tilde X_\infty(x))$ has the distribution of $r(X_0)$ under $\pi$. Hence, Cauchy-Schwarz implies that
\begin{align*}
|E_xr_c(X_n)| &= |Er(\tilde X_n(x))-Er(\tilde X_\infty(x))|\\
&\leq \sum_{j=n}^{\infty}|Er(\tilde X_j(x))-Er(\tilde X_{j+1}(x))|\\
&\leq \sum_{j=n}^{\infty}E^{1/2}|r(\tilde X_j(x))-r(\tilde X_{j+1}(x))|^2\\
&\leq c^{1/2} \sum_{j=n}^{\infty}\rho^{j/2}E^{1/2}|X_1(x)-x|^2,
\end{align*}
and consequently, 
\begin{align*}
\sum_{k=0}^{\infty}|E_xr_c(X_k)|<\infty,
\end{align*}
and a solution $g$ to Poisson's equation can then be defined by \eqref{eq22}. Since
\begin{align*}
\sum_{j=0}^{\infty} r_c(\tilde X_j(x)) \leq \sum_{j=0}^{\infty}r_c(\tilde X_j(y))
\end{align*}
for $x\leq y$, it follows that $g$ is non-decreasing. Similarly, we find that
\begin{align*}
|E_xr_c(X_n)-E_yr_c(X_n)|&\leq c^{1/2}E^{1/2}|X_n(x)-X_n(y)|^2\\
&\leq c^{1/2}\rho^{n/2}|x-y|
\end{align*}
so that
\begin{align*}
|\tilde g(x)-\tilde g(y)| \leq \frac{c^{1/2}}{1-\rho^{1/2}}|x-y|
\end{align*}
and hence $\tilde g$ is Lipschitz. We have proved our final result.

\begin{theorem} Suppose that $X$ is a stochastically monotone Markov chain satisfying \eqref{eq221} and \eqref{eq222}. If $r$ is a non-decreasing Lipschitz function, then $g$ as defined by \eqref{eq22} is a finite-valued non-decreasing Lipschitz solution to Poisson's equation \eqref{eq11}.
\end{theorem}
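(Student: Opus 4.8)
The plan is to organize the estimates worked out in the discussion preceding the theorem into four stages: (i) produce the stationary distribution $\pi$ and show $r$ is $\pi$-integrable, so that $r_c$ is well defined; (ii) verify the summability condition \eqref{eq21}, so that the series defining $g$ in \eqref{eq22} is finite-valued; (iii) invoke Proposition 1 to conclude that $g$ is a non-decreasing solution of Poisson's equation \eqref{eq11}; and (iv) obtain the Lipschitz bound from a geometric estimate on $|E_xr_c(X_n)-E_yr_c(X_n)|$.

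For (i) and (ii) I would work with the backward iteration $\tilde X_n(x)=(\kappa_1\circ\cdots\circ\kappa_n)(x)$, which has the same law as $X_n$ under $P_x$. Peeling off the outermost map $\kappa_1$ and conditioning on the inner composition (which is independent of $U_1$), $k$ applications of the contraction hypothesis \eqref{eq221} give $E|\tilde X_{k+1}(x)-\tilde X_k(x)|^2\le\rho^k E_x|X_1-x|^2$, and \eqref{eq222} then makes the $L^2$ norms $\|\tilde X_{k+1}(x)-\tilde X_k(x)\|_2$ summable in $k$; hence $(\tilde X_n(x):n\ge0)$ and, by the Lipschitz property of $r$, $(r(\tilde X_n(x)):n\ge0)$ are Cauchy in $L^2$. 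By \citet{diaconisIteratedRandomFunctions1999}, $X$ has a unique stationary distribution $\pi$, and the $L^2$-limit $r(\tilde X_\infty(x))$ has the law of $r(X_0)$ under $\pi$; since this limit lies in $L^1$, $r$ is $\pi$-integrable, and $E_xr_c(X_n)=E[r(\tilde X_n(x))-r(\tilde X_\infty(x))]$. Telescoping and Cauchy--Schwarz give $|E_xr_c(X_n)|\le\sum_{j\ge n}\|r(\tilde X_{j+1}(x))-r(\tilde X_j(x))\|_2\le c^{1/2}(E_x|X_1-x|^2)^{1/2}\sum_{j\ge n}\rho^{j/2}$, whose sum over $n$ is finite; this is \eqref{eq21}. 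The same $L^2$ bound, applied to $\tilde X_n(x)-x$, also yields $E_x|r_c(X_n)|<\infty$ for every $n$, which supplies the integrability of $r_c(X_n)$ used in the proof of Proposition 1. Stage (iii) is then immediate: $X$ is stochastically monotone and $r_c$ is non-decreasing, so Proposition 1 gives that $g$ in \eqref{eq22} is a finite-valued, non-decreasing solution of \eqref{eq11}.

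For (iv) I would run the two iterations $\tilde X_n(x)$ and $\tilde X_n(y)$ off the \emph{same} maps $\kappa_1,\dots,\kappa_n$, a coupling with the correct marginals, so that $E_xr_c(X_n)-E_yr_c(X_n)=E[r(\tilde X_n(x))-r(\tilde X_n(y))]$. Peeling off $\kappa_1$ and iterating \eqref{eq221} as before --- now in the two spatial arguments rather than in the time index --- gives $E|\tilde X_n(x)-\tilde X_n(y)|^2\le\rho^n|x-y|^2$, so Cauchy--Schwarz and the Lipschitz property of $r$ give $|E_xr_c(X_n)-E_yr_c(X_n)|\le c^{1/2}\rho^{n/2}|x-y|$. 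Summing over $n\ge0$ bounds $|g(x)-g(y)|$ by $c^{1/2}(1-\rho^{1/2})^{-1}|x-y|$, the desired Lipschitz estimate.

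The only step requiring care is the bookkeeping in the iterated contraction: at each peeling step one must check that the inner composition is independent of the uniform variable being integrated out, so that \eqref{eq221} can be applied conditionally, and that all arguments remain in $\mathbb{R}_+$ (which they do, since the state space is $\mathbb{R}_+$ and the coupling preserves order, so \eqref{eq221} --- stated for ordered nonnegative pairs --- applies). The only external ingredient is the Diaconis--Freedman theorem, which under \eqref{eq221}--\eqref{eq222} furnishes existence and uniqueness of $\pi$ and identifies the limiting law; everything else is elementary $L^2$ estimation combined with Proposition 1.
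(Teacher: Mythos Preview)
Your proposal is correct and follows essentially the same route as the paper: the backward iteration $\tilde X_n$, the iterated $L^2$ contraction estimate, the Diaconis--Freedman identification of $\pi$, the telescoping plus Cauchy--Schwarz bound yielding \eqref{eq21}, and the same geometric Lipschitz estimate summed over $n$. The only cosmetic difference is that you invoke Proposition~1 for monotonicity, whereas the paper re-derives it inline from the monotonicity of $\tilde X_n(\cdot)$; the two are equivalent.
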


\bibliographystyle{newapa-and}
\bibliography{Markov-static}

\begin{thebibliography}{}

\bibitem[\protect\citeauthoryear{Athreya and Ney}{Athreya and
  Ney}{1978}]{athreyaNewApproachLimit1978}
Athreya, K.~B. and Ney, P. (1978).
\newblock A new approach to the limit theory of recurrent {{Markov}} chains.
\newblock {\em Transactions of the American Mathematical Society}, {\em 245},
  493--501.

\bibitem[\protect\citeauthoryear{Diaconis and Freedman}{Diaconis and
  Freedman}{1999}]{diaconisIteratedRandomFunctions1999}
Diaconis, P. and Freedman, D. (1999).
\newblock Iterated random functions.
\newblock {\em SIAM Review}, {\em 41\/}(1), 45--76.

\bibitem[\protect\citeauthoryear{Durrett}{Durrett}{2019}]{durrettProbabilityTheoryExamples2019}
Durrett, R. (2019).
\newblock {\em Probability: Theory and Examples}.
\newblock {Cambridge University Press}.

\bibitem[\protect\citeauthoryear{Glynn and Infanger}{Glynn and
  Infanger}{2022}]{glynnSolvingPoissonEquation2022}
Glynn, P.~W. and Infanger, A. (2022).
\newblock Solving {{Poisson}}'s equation: {{Existence}}, {{uniqueness}}, and
  {{martingale structure}}.
\newblock {\em Submitted for publication.}

\bibitem[\protect\citeauthoryear{Infanger, Glynn and Liu}{Infanger
  et~al.}{2022}]{infangerConvergenceGeneralTruncationAugmentation2022}
Infanger, A., Glynn, P.~W., and Liu, Y. (2022).
\newblock On {{convergence}} of {{general truncation-augmentation schemes}} for
  {{approximating stationary distributions}} of {{Markov chains}}.
\newblock {\em In preparation.}

\bibitem[\protect\citeauthoryear{Kamae, Krengel and O'Brien}{Kamae
  et~al.}{1977}]{kamaeStochasticInequalitiesPartially1977}
Kamae, T., Krengel, U., and O'Brien, G.~L. (1977).
\newblock Stochastic inequalities on partially ordered spaces.
\newblock {\em Annals of Probability}, {\em 5\/}(6), 899--912.

\bibitem[\protect\citeauthoryear{Lund, Meyn and Tweedie}{Lund
  et~al.}{1996}]{lundComputableExponentialConvergence1996}
Lund, R.~B., Meyn, S.~P., and Tweedie, R.~L. (1996).
\newblock Computable exponential convergence rates for stochastically ordered
  {{Markov}} processes.
\newblock {\em Annals of Applied Probability}, {\em 6\/}(1), 218--237.

\bibitem[\protect\citeauthoryear{Lund and Tweedie}{Lund and
  Tweedie}{1996}]{lundGeometricConvergenceRates1996}
Lund, R.~B. and Tweedie, R.~L. (1996).
\newblock Geometric convergence rates for stochastically ordered {{Markov}}
  chains.
\newblock {\em Mathematics of Operations Research}, {\em 21\/}(1), 182--194.

\bibitem[\protect\citeauthoryear{Maigret}{Maigret}{1978}]{maigretTheoremeLimiteCentrale1978}
Maigret, N. (1978).
\newblock Th\'eor\`eme de limite centrale fonctionnel pour une cha\^ine de
  {{Markov}} r\'ecurrente au sens de {{Harris}} et positive.
\newblock {\em 14}, 425--440.

\bibitem[\protect\citeauthoryear{Meyn and Tweedie}{Meyn and
  Tweedie}{2012}]{meynMarkovChainsStochastic2012}
Meyn, S.~P. and Tweedie, R.~L. (2012).
\newblock {\em Markov Chains and Stochastic Stability}.
\newblock {Springer Science \& Business Media}.

\bibitem[\protect\citeauthoryear{Nummelin}{Nummelin}{1978}]{nummelinSplittingTechniqueHarris1978}
Nummelin, E. (1978).
\newblock A splitting technique for {{Harris}} recurrent {{Markov}} chains.
\newblock {\em Zeitschrift f\"ur Wahrscheinlichkeitstheorie und verwandte
  Gebiete}, {\em 43\/}(4), 309--318.

\bibitem[\protect\citeauthoryear{Ross}{Ross}{1996}]{rossStochasticProcesses1996}
Ross, S.~M. (1996).
\newblock {\em Stochastic Processes\/} (Second ed.).
\newblock {New York}: {John Wiley \& Sons}.

\bibitem[\protect\citeauthoryear{Smith}{Smith}{1955}]{smithRegenerativeStochasticProcesses1955}
Smith, W.~L. (1955).
\newblock Regenerative stochastic processes.
\newblock {\em Proceedings of the Royal Society of London. Series A,
  Mathematical and Physical Sciences}, {\em 232\/}(1188), 6--31.

\bibitem[\protect\citeauthoryear{Stoyan}{Stoyan}{1983}]{stoyanComparisonMethodsQueues1983}
Stoyan, D. (1983).
\newblock {\em Comparison {{Methods}} for {{Queues}} and other {{Stochastic
  Models}}}.
\newblock {New York}: {John Wiley \& Sons}.

\end{thebibliography}

\clearpage

\end{document}